\theoremstyle{plain}
\newtheorem{theorem}{Theorem}[section]
\newtheorem{corollary}{Corollary}[section]
\numberwithin{equation}{section}
\theoremstyle{remark}
\newtheorem{remark}{Remark}[section]
 \numberwithin{equation}{section}
\def\<{\left < }
\def\>{\right >}
\def\({\left ( }
\def\){\right )}
\begin{document}
\title[Conformal canonical vector field]{Euclidean submanifolds with
conformal canonical vector field}
\author{ Bang-Yen Chen}
\author[S. Deshmukh]{Sharief Deshmukh}

\begin{abstract}
The position vector field $\hbox{\bf x}$ is the most elementary and natural
geometric object on a Euclidean submanifold $M$. The position vector field
plays very important roles in mathematics as well as in physics. Similarly,
the tangential component $\hbox{\bf x}^T$ of the position vector field is
the most natural vector field tangent to the Euclidean submanifold $M$. We
simply call the vector field $\hbox{\bf x}^T$ the \textit{canonical vector
field} of the Euclidean submanifold $M$.

In earlier articles \cite{C16,C17a,CV17,CW17}, we investigated Euclidean
submanifolds whose canonical vector fields are concurrent, concircular, or
torse-forming. In this article we study Euclidean submanifolds with
conformal canonical vector field. In particular, we characterize such
submanifolds. Several applications are also given. In the last section we
present three global results on complete Euclidean submanifolds with
conformal canonical vector field.
\end{abstract}

\subjclass[2000]{53A07, 53C40, 53C42}
\keywords{Euclidean submanifold, canonical vector field, conformal vector
field, second fundamental form, umbilical, pseudo-umbilical.}
\maketitle

\section{Introduction}

For an $n$-dimensional submanifold $M$ in the Euclidean $m$-space $\mathbb{E}%
^m$, the most elementary and natural geometric object is the position vector
field $\mathbf{x}$ of $M$. The position vector is a Euclidean vector $%
\mathbf{x} =\overrightarrow{OP}$ that represents the position of a point $%
P\in M$ in relation to an arbitrary reference origin $O\in \mathbb{E}^m$.

The position vector field plays important roles in physics, in particular in
mechanics. For instance, in any equation of motion, the position vector $\mathbf{x}(t)$ is usually the most sought-after quantity because the
position vector field defines the motion of a particle (i.e., a point mass):
its location relative to a given coordinate system at some time variable $t$. The first and the second derivatives of the position vector field with
respect to time $t$ give the velocity and acceleration of the particle.

For a Euclidean submanifold $M$ of $\mathbb{E}^m$, there exists a natural
decomposition of the position vector field $\mathbf{x}$ given by: 
\begin{align}
\mathbf{x}=\mathbf{x}^T+\mathbf{x}^N,
\end{align}
where $\mathbf{x}^T$ and $\mathbf{x}^N$ are the tangential and the normal
components of $\mathbf{x}$, respectively. We denote by $|{\hbox{\bf x}}^T|$
and $|\hbox{\bf x}^N |$ the lengths of $\hbox{\bf x}^T$ and of $\hbox{\bf x}%
^N$, respectively.

A vector field $v$ on a Riemannian manifold $N$ is called a \textit{%
torse-forming vector field} if it satisfies (cf. \cite{S54,Y40,Y44}) 
\begin{align}  \label{2.7}
\nabla_X v=\varphi X+\alpha(X) v,\;\; \forall X\in TN,
\end{align}
for some function $\varphi$ and 1-form $\alpha$ on $M$, where $\nabla$
denotes the Levi-Civita connection of $M$. In the case that $\alpha$ is
identically zero, $v$ is called a \textit{concircular vector field}. In
particular, if $\alpha=0$ and $\varphi=1$, then $v$ is called a \textit{%
concurrent vector field}.

In earlier articles, we have investigated Euclidean submanifolds whose
canonical vector fields are concurrent \cite{C16,C17a}, concircular \cite%
{CW17}, or torse-forming \cite{CV17}. See \cite{C17c,C17d} for two recent
surveys on several topics in differential geometry associated with position
vector fields on Euclidean submanifolds.

A tangent vector field $v$ on a Riemannian manifold $(N,g)$ is called a {\it conformal vector field} if it satisfies
\begin{align} {\mathcal L}_vg= 2\varphi g,\end{align}
where $\mathcal L$ denotes the Lie derivative of $(N,g)$ and $\varphi$ is called the {\it potential function} of $v$. 

In this article we study Euclidean submanifolds with conformal canonical
vector field. In particular, we characterize such submanifolds. Several
applications are also given. In the last section we present three global
results on complete Euclidean submanifolds with conformal canonical vector
field.

\section{Preliminaries}

\label{section 2}

Let $x: M\to \mathbb{E}^m$ be an isometric immersion of a connected
Riemannian manifold $M$ into a Euclidean $m$-space $\mathbb{E}^m$. For each
point $p\in M$, we denote by $T_pM$ and $T^\perp_p M$ the tangent space and
the normal space of $M$ at $p$, respectively.

Let $\nabla$ and $\tilde\nabla$ denote the Levi--Civita connections of $M$
and ${\mathbb{E}}^{m}$, respectively. The formulas of Gauss and Weingarten
are given respectively by (cf. \cite{book73,book11,book17}) 
\begin{align}  \label{2.1}
&\tilde \nabla_XY=\nabla_X Y+h(X,Y), \\
& \tilde\nabla_X\xi =-A_\xi X+D_X\xi ,  \label{2.2}
\end{align}
for vector fields $X,\,Y$ tangent to $M$ and $\xi$ normal to $M$, where $h$
is the second fundamental form, $D$ the normal connection and $A$ the shape
operator of $M$.

For each normal vector $\xi$ at $p$, the shape operator $A_\xi$ is a
self-adjoint endomorphism of $T_pM$. The second fundamental form $h$ and the
shape operator $A$ are related by 
\begin{equation}  \label{2.3}
g(A_\xi X,Y)g=\tilde g(h(X,Y), \xi),
\end{equation}
where $g$ and $\tilde g$ denote the metric of $M$ and the metric of the
ambient Euclidean space, respectively.

The mean curvature vector $H$ of an $n$-dimensional submanifold $M$ is
defined by 
\begin{align}  \label{2.4}
H=\frac{1}{n} \mathrm{trace}\; h.
\end{align}

A submanifold $M$ is called \textit{totally umbilical} (respectively, 
\textit{totally geodesic}) if its second fundamental form $h$ satisfies 
\begin{align}  \label{2.5}
h(X,Y)=g(X,Y) H
\end{align}
identically (respectively, $h=0$ identically).

A submanifold is said to be \textit{umbilical with respect to a normal
vector field $\xi$} if its second fundamental form $h$ satisfies 
\begin{align}  \label{2.6}
\tilde g(h(X,Y), \xi)=\mu g(X,Y)
\end{align}
for some function $\mu$. In particular, a submanifold $M$ is called \textit{%
pseudo-umbilical} if it is umbilical with respect to the mean curvature
vector field $H$ of $M$.

The Laplace operator $\Delta $ of $M$ acting on smooth vector fields on a
Riemannian $n$-manifold $(M,g)$ is defined by%
\begin{align}  \label{2.7}
\Delta X=\sum\limits_{i=1}^{n}\left( \nabla _{e_{i}}\nabla _{e_{i}}X-\nabla
_{\nabla _{e_{i}}e_{i}}X\right),
\end{align}
where $\{e_1,\ldots,e_n\}$ is an orthonormal local frame of $M$.

\section{Euclidean submanifolds with conformal canonical vector field}

The following result characterizes all Euclidean submanifolds with conformal
canonical vector field.

\begin{theorem}
\label{T:3.1} Let $M$ be a submanifold of the Euclidean $m$-space $\mathbb{E}%
^m$. Then the canonical vector field $\hbox{\bf x}^T$ of $M$ is a conformal
vector field if and only if $M$ is umbilical with respect to the normal
component $\hbox{\bf x}^N$ of the position vector field $\hbox{\bf x}$.
\end{theorem}

\begin{proof}
Let $M$ be a submanifold of $\mathbb{E}^{m}$. Then, by using the fact that
the position vector field is a concurrent vector, we derive from Gauss' and
Weingarten's formulas that 
\begin{align}
& Z =\tilde \nabla_Z \hbox{\bf x}=\nabla_Z \hbox{\bf x}^T+h(\hbox{\bf x}%
^T,Z)-A_{\hbox{\bf x}^N} Z+D_Z \hbox{\bf x}^N  \notag
\end{align}
for any vector $Z$ tangent to $M$, where $\tilde \nabla$ and $\nabla$ are
the Levi-Civita connections of $\mathbb{E}^{n+1}$ and of $M$, respectively.
By comparing the tangential and normal components of the last equation, we
obtain 
\begin{align}  \label{3.1}
& \nabla_Z \hbox{\bf x}^T=Z+A_{\hbox{\bf x}^N} Z, \\
& h(\hbox{\bf x}^T,Z)= - D_Z \hbox{\bf x}^N.  \label{3.2}
\end{align}

On the other hand, it is well-known that the Lie derivative on $M$ satisfies
(see, e.g. \cite[Page 18]{book17} or \cite{Y57}) 
\begin{align}  \label{3.3}
({\mathcal{L}}_v g)(X,Y)=g(\nabla_X v,Y)+g(X,\nabla_Y v)
\end{align}
for any vector fields $X,Y,v$ tangent to $M$.

After combining \eqref{3.1} and \eqref{3.3} we find 
\begin{align}  \label{3.4}
({\mathcal{L}}_{\hbox{\bf x}^T} g)(X,Y)=2 g(X,Y)+ g(A_{\hbox{\bf x}^N} X
,Y)+g(X,A_{\hbox{\bf x}^N} Y)
\end{align}
Therefore, by applying \eqref{2.3} we obtain 
\begin{align}  \label{3.5}
({\mathcal{L}}_{\hbox{\bf x}^T} g)(X,Y)=2 g(X,Y)+ 2 g(h(X,Y), \hbox{\bf x}^N)
\end{align}
for vector fields $X,Y$ tangent to $M$.

Now, let us suppose that the canonical vector field $\hbox{\bf x}^T$ of the
submanifold $M$ is a conformal vector field. Then we have 
\begin{align}  \label{3.6}
{\mathcal{L}}_{\hbox{\bf x}^T} g=2\varphi g
\end{align}
for a function $\varphi$.

From \eqref{3.5} and \eqref{3.6} we derive 
\begin{align}  \label{3.7}
g(h(X,Y), \hbox{\bf x}^N)=(\varphi-1) g(X,Y),
\end{align}
which shows that $M$ is umbilical with respect to the normal component $%
\hbox{\bf x}^N$ of the position vector field $\hbox{\bf x}$.

Conversely, let us assume that the submanifold $M$ is umbilical with respect
to the normal component $\hbox{\bf x}^N$ so that we have 
\begin{align}  \label{3.8}
g(h(X,Y), \hbox{\bf x}^N)=\eta g(X,Y)
\end{align}
for some function $\eta$. Then it follows from \eqref{3.5} and \eqref{3.8}
that 
\begin{align}  \label{3.9}
({\mathcal{L}}_{\hbox{\bf x}^T} g)(X,Y)=2(\eta+2) g(X,Y).
\end{align}
Thus the canonical vector field $\hbox{\bf x}^T$ is a conformal vector field
on $M$.
\end{proof}

\begin{remark}
By applying the same proof as Theorem \ref{T:3.1}, we also know that Theorem \ref
{T:3.1} remains true for space-like submanifolds of pseudo-Euclidean spaces.
\end{remark}

A unit normal vector field $\xi$ of a Euclidean submanifold $M$ is called a 
\textit{parallel} (resp., \textit{nonparallel}\/) normal vector field if $
D\xi=0$ (resp., $D\xi\ne 0$) everywhere on $M$ (cf. \cite{book73,CY71,CY73}).

An easy consequence of Theorem \ref{T:3.1} is the following.

\begin{corollary}
\label{C:3.1} Let $M$ be a submanifold of $\mathbb{E}^m$ with conformal
canonical vector field. If $\hbox{\bf x}^N\ne 0$ and $\hbox{\bf x}^N/|
\hbox{\bf x}^N|$ is a parallel normal vector field, then either

\begin{itemize}
\item[\textrm{(1)}] $M$ lies in a hyperplane $\mathbb{E}^{m-1}$ of $\mathbb{E}^{m}$, or

\item[\textrm{(2)}] $M$ lies in hypersphere of $S^{m-1}$ centered the origin
of $\mathbb{E}^{m}$.
\end{itemize}
\end{corollary}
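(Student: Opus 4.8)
The plan is to reduce everything to the behavior of the single unit normal $\xi:=\mathbf{x}^N/|\mathbf{x}^N|$ and then exploit the Gauss--Weingarten formulas. First I would invoke Theorem~\ref{T:3.1}: the conformality of $\mathbf{x}^T$ gives \eqref{3.7}, so $M$ is umbilical with respect to $\mathbf{x}^N$. Writing $\rho:=|\mathbf{x}^N|$ and $\xi:=\mathbf{x}^N/\rho$, relation \eqref{3.7} reads $A_{\mathbf{x}^N}=(\varphi-1)I$, and since $A_{\mathbf{x}^N}=\rho A_\xi$ this yields $A_\xi=\lambda I$ with $\lambda:=(\varphi-1)/\rho$. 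The hypothesis $D\xi=0$ then turns Weingarten's formula \eqref{2.2} into $\tilde\nabla_Z\xi=-\lambda Z$ for every tangent $Z$. Feeding $A_\xi=\lambda I$ together with $D\xi=0$ into the Codazzi equation produces $(Z\lambda)W=(W\lambda)Z$ for all tangent $Z,W$, whence $\lambda$ is constant when $\dim M\ge 2$. This reduces the problem to the two cases $\lambda=0$ and $\lambda\neq0$.

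If $\lambda=0$, then $\tilde\nabla_Z\xi=0$ for all tangent $Z$, so $\xi$ is a fixed vector of $\mathbb{E}^m$; differentiating $\langle\mathbf{x},\xi\rangle$ along $M$ gives $Z\langle\mathbf{x},\xi\rangle=\langle Z,\xi\rangle=0$, so $\langle\mathbf{x},\xi\rangle$ is constant and $M$ lies in a hyperplane, which is alternative~(1). If $\lambda\neq0$, I would introduce the point $\mathbf{c}:=\mathbf{x}+\lambda^{-1}\xi$; using $\tilde\nabla_Z\mathbf{x}=Z$ and $\tilde\nabla_Z\xi=-\lambda Z$ one finds $\tilde\nabla_Z\mathbf{c}=0$, so $\mathbf{c}$ is a fixed point of $\mathbb{E}^m$ and $|\mathbf{x}-\mathbf{c}|=|\lambda|^{-1}$ is constant. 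Thus $M$ lies in a hypersphere of radius $1/|\lambda|$, which is alternative~(2).

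The step I expect to be the main obstacle is identifying the center of this hypersphere with the origin. The argument above only locates the center at the fixed point $\mathbf{c}=\mathbf{x}+\lambda^{-1}\xi$, and $\mathbf{c}=O$ is equivalent to $\mathbf{x}=-\lambda^{-1}\xi$, i.e. to $\mathbf{x}^T=0$ together with $\rho=-1/\lambda$. Since \eqref{3.2} combined with $D\xi=0$ gives $h(\mathbf{x}^T,Z)=-(Z\rho)\xi$ and hence $\nabla\rho=-\lambda\mathbf{x}^T$, the condition $\mathbf{c}=O$ is precisely the assertion that $\rho=|\mathbf{x}^N|$ is constant. Pinning this down is the delicate point: one must either extract $\mathbf{x}^T\equiv0$ from the hypotheses or else sharpen alternative~(2) to a hypersphere centered at the constant point $\mathbf{c}$, and I would expect the bulk of the work to concentrate here.
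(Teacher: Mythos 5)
Your reduction is sound and, up to the final step, follows the same route as the paper: Theorem~\ref{T:3.1} gives $A_{\mathbf{x}^N}=(\varphi-1)I$, the parallelism of $\xi=\mathbf{x}^N/|\mathbf{x}^N|$ together with Codazzi makes $\lambda=(\varphi-1)/|\mathbf{x}^N|$ constant (for $\dim M\ge 2$), and the dichotomy $\lambda=0$ versus $\lambda\ne 0$ yields a hyperplane, respectively a hypersphere centered at the fixed point $\mathbf{c}=\mathbf{x}+\lambda^{-1}\xi$. Your hyperplane case is complete (the paper only says it is ``easy to verify''), and the identity $\nabla|\mathbf{x}^N|=-\lambda\,\mathbf{x}^T$ obtained from \eqref{3.2} is correct. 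In this respect your write-up is more self-contained than the paper's, which disposes of the case $\eta\ne 0$ in one line by citing \cite[Theorem 3.3]{CY71}.

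The genuine gap is exactly the one you flag yourself: nothing in your computation forces $\mathbf{c}$ to be the origin, and conclusion (2) asserts precisely that. The paper does not derive this either --- it imports it wholesale from \cite[Theorem 3.3]{CY71} --- so to finish you must either invoke that theorem or reproduce its content; your local identities cannot close the gap on their own. Indeed, your diagnosis that $\mathbf{c}=O$ is equivalent to $\mathbf{x}^T\equiv 0$, i.e.\ to $|\mathbf{x}^N|$ (hence $|\mathbf{x}|$) being constant, is correct, and it points at a real difficulty: a round sphere $S^{m-1}(\mathbf{c},r)$ with $0<|\mathbf{c}|<r$ has $\mathbf{x}^N\ne 0$ everywhere, parallel $\mathbf{x}^N/|\mathbf{x}^N|$, and conformal $\mathbf{x}^T$ (being totally umbilical it is umbilical with respect to $\mathbf{x}^N$, so Theorem~\ref{T:3.1} applies), yet it is centered away from the origin. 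So the ``centered at the origin'' refinement cannot follow from the hypotheses as you (or the corollary) state them; what your argument actually proves is the weaker alternative ``$M$ lies in some hypersphere,'' and any stronger conclusion must come from whatever additional hypotheses \cite[Theorem 3.3]{CY71} carries. As submitted, then, your proof establishes a (correct) statement that is weaker than conclusion (2) of the corollary, and the missing step is not a formality.
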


\begin{proof}
Let $M$ be a submanifold of $\mathbb{E}^m$ with conformal canonical vector
field. If $\hbox{\bf x}^N\ne 0$ and $\hbox{\bf x}^N/|\hbox{\bf x}^N|$ is a
parallel normal vector field, then it follows from Theorem \ref{T:3.1} that $
M$ is umbilical with respect to the parallel unit normal vector field $
\hbox{\bf x}^N/|\hbox{\bf x}^N|$ satisfying \eqref{3.8}.

If $\eta$ in \eqref{3.8} vanishes identically, then it is easy to verify
that $M$ lies in a hyperplane $\mathbb{E}^{m-1}$ of $\mathbb{E}^{m}$.

On the other hand, if $\eta\ne 0$, then it follows from \cite[Theorem 3.3]
{CY71} that $M$ lies in hypersphere of $S^{m-1}$ centered the origin of $
\mathbb{E}^{m}$.
\end{proof}

In the case that $M$ is a Euclidean hypersurface of $\mathbb{E}^{n+1}$ we
have:

\begin{corollary}
\label{C:3.2} Let $M$ be a hypersurface of $\mathbb{E}^{n+1}$ with conformal
canonical vector field. If $\hbox{\bf x}^N\ne 0$, then either

\begin{itemize}
\item[\textrm{(1)}] $M$ lies in hypersphere of $S^{n}$ centered the origin
of $\mathbb{E}^{n+1}$ or

\item[\textrm{(2)}] $M$ lies in a hyperplane which does not contained the
origin of $\mathbb{E}^{n+1}$.
\end{itemize}
\end{corollary}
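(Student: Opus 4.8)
The plan is to reduce everything to Corollary~\ref{C:3.1} by exploiting the fact that, for a hypersurface, the normal bundle is a line bundle, so that the relevant unit normal is automatically parallel. The only genuinely new work will be to identify which alternative of Corollary~\ref{C:3.1} occurs and to upgrade the hyperplane conclusion to a hyperplane avoiding the origin.

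First I would observe that, since $M$ is a hypersurface of $\mathbb{E}^{n+1}$, the normal space $T^\perp_p M$ is one-dimensional at each point. Assuming $\mathbf{x}^N\neq 0$, set $\xi=\mathbf{x}^N/|\mathbf{x}^N|$. Because $|\xi|\equiv 1$ we have $\tilde g(D_X\xi,\xi)=\frac{1}{2}X(|\xi|^2)=0$, and since $D_X\xi$ is normal while $\xi$ spans the one-dimensional normal space, it follows that $D_X\xi=0$ for all $X$. Hence $\mathbf{x}^N/|\mathbf{x}^N|$ is a parallel unit normal vector field, and Corollary~\ref{C:3.1} applies with no further hypotheses.

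Applying Corollary~\ref{C:3.1} then produces two cases according to the function $\eta$ in \eqref{3.8}: if $\eta\neq 0$ then $M$ lies in a hypersphere $S^n$ centered at the origin, which is alternative (1); if $\eta=0$ then $M$ lies in a hyperplane of $\mathbb{E}^{n+1}$. It remains only to verify that in the latter case the hyperplane cannot contain the origin, which is the one point requiring an argument.

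I expect this final geometric observation to be the only subtle step, and I would handle it by contradiction. Suppose $M$ lies in a hyperplane $\Pi$ passing through the origin. Since $M$ is an $n$-dimensional submanifold contained in the $n$-dimensional linear subspace $\Pi$, the immersion is a local diffeomorphism onto an open subset of $\Pi$, so $T_p M=\Pi$ for every $p\in M$. For any point $P\in M$, the position vector $\mathbf{x}=\overrightarrow{OP}$ joins two points of $\Pi$ and therefore lies in $\Pi=T_P M$, which forces $\mathbf{x}^N=0$, contradicting the hypothesis $\mathbf{x}^N\neq 0$. Consequently the hyperplane does not pass through the origin, establishing alternative (2) and completing the proof.
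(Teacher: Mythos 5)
Your proof is correct and follows essentially the same route as the paper: observe that the unit normal of a hypersurface is automatically parallel, invoke the dichotomy of Corollary~\ref{C:3.1}, and rule out a hyperplane through the origin because that would force $\mathbf{x}^N=0$. Your version is in fact slightly more careful than the paper's (which cites Theorem~\ref{T:3.1} where Corollary~\ref{C:3.1} is really what yields the hypersphere/hyperplane alternatives), but the argument is the same.
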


\begin{proof}
Let $M$ be a hypersurface of $\mathbb{E}^{n+1}$. Suppose that the canonical
vector field $\hbox{\bf x}^T$ of $M$ is a conformal vector field. If $%
\hbox{\bf x}^N\ne 0$, then the unit normal vector field of $M$ is a parallel
normal vector field automatically. Hence Theorem \ref{T:3.1} implies that $M$
lies either in a hypersphere of $S^{n}$ centered the origin of $\mathbb{E}%
^{n+1}$ or in a hyperplane of $\mathbb{E}^{n+1}$.

If the second case occurs, then the hyperplane does not contained the origin
of $\mathbb{E}^{n+1}$; otherwise one has $\hbox{\bf x}^N=0$ which is a
contradiction.
\end{proof}

For Euclidean submanifolds of codimension 2, we have the following.

\begin{corollary}
\label{C:3.3} Let $(M,g)$ be an $n$-dimensional submanifold of $\mathbb{E}%
^{n+2}$ with $n>3$ and $\hbox{\bf x}^N\ne 0$. If the canonical vector field $%
\hbox{\bf x}^T$ of $M$ is a conformal vector field, then we have:

\begin{itemize}
\item[\textrm{(1)}] If $\frac{\hbox{\bf x}^N}{|\hbox{\bf x}^N|}$ is a
parallel normal section, then $(M,g)$ lies either a hyperplane or in a
hypersphere of $\mathbb{E}^{n+2}$.

\item[\textrm{(2)}] If $\frac{\hbox{\bf x}^N}{|\hbox{\bf x}^N|}$ is a
nonparallel normal section, then $(M,g)$ is a conformally flat space.
Moreover, in this case $M$ is the locus of $(n-1)$-spheres.
\end{itemize}
\end{corollary}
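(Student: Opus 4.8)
Both statements rest on Theorem~\ref{T:3.1}: since $\mathbf{x}^T$ is conformal, $M$ is umbilical with respect to $\mathbf{x}^N$, so writing $\xi_1=\mathbf{x}^N/|\mathbf{x}^N|$ and completing it to an orthonormal normal frame $\{\xi_1,\xi_2\}$ we have $A_{\xi_1}=\mu\,\mathrm{Id}$ with $\mu=(\varphi-1)/|\mathbf{x}^N|$. Part (1) is then immediate: its hypotheses are exactly those of Corollary~\ref{C:3.1} with $m=n+2$, so $M$ lies in a hyperplane $\mathbb{E}^{n+1}$ or in a hypersphere of $\mathbb{E}^{n+2}$ centered at the origin.

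For part (2) the essential point is to locate a shape-operator eigenvalue of high multiplicity. The plan is to record the normal connection as $D_Z\xi_1=\theta(Z)\xi_2$, so that nonparallelism reads $\theta\not\equiv0$, and to read off from \eqref{3.2} the identities $\mathrm{grad}\,|\mathbf{x}^N|=-\mu\,\mathbf{x}^T$ and $A_{\xi_2}\mathbf{x}^T=-|\mathbf{x}^N|\,\theta^{\sharp}$. Inserting $A_{\xi_1}=\mu\,\mathrm{Id}$ into the Codazzi equation and taking its $\xi_1$-component gives
\begin{align}
(X\mu)\,g(Y,Z)-(Y\mu)\,g(X,Z)=\theta(X)\,g(A_{\xi_2}Y,Z)-\theta(Y)\,g(A_{\xi_2}X,Z). \notag
\end{align}
Restricting $X,Y$ to the $(n-1)$-dimensional distribution $\theta^{\perp}=\ker\theta$ (here $n>3$ guarantees $\dim\theta^{\perp}\ge2$) first forces $\mathrm{grad}\,\mu$ to be proportional to $\theta^{\sharp}$, and then the same identity collapses to $A_{\xi_2}X=cX$ for all $X\in\theta^{\perp}$ and a single function $c$. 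Hence $A_{\xi_2}$ carries the eigenvalue $c$ with multiplicity at least $n-1$, the complementary eigendirection being $\theta^{\sharp}$.

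It remains to deduce the two conclusions from this eigenvalue structure. Substituting $A_{\xi_1}=\mu\,\mathrm{Id}$ and the two-eigenvalue form of $A_{\xi_2}$ into the Gauss equation writes the curvature tensor of $M$ as a constant-curvature term (from $\mu^{2}$) plus the hypersurface-type term built from $A_{\xi_2}$; the former has vanishing Weyl tensor, and the latter vanishes exactly because $A_{\xi_2}$ has an eigenvalue of multiplicity $\ge n-1$ (the classical Cartan--Schouten criterion, available since $n\ge4$), so $M$ is conformally flat. Finally, the multiplicity-$(n-1)$ eigendistribution $\theta^{\perp}$ is integrable, and along each leaf both $A_{\xi_1}$ and $A_{\xi_2}$ are scalar, so the leaves are totally umbilical $(n-1)$-submanifolds of $\mathbb{E}^{n+2}$, that is, round $(n-1)$-spheres; exhibiting $M$ as the union of this one-parameter family yields the ``locus of $(n-1)$-spheres''. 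I expect the Codazzi step isolating the eigenvalue of multiplicity $n-1$ to be the main obstacle; once it is secured, conformal flatness and the spherical foliation follow from standard curvature and integrability arguments.
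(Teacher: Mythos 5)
Your proposal is correct in substance, but for part (2) it takes a genuinely different route from the paper. The paper disposes of part (1) exactly as you do, via Corollary \ref{C:3.1}, and then for part (2) it simply invokes Theorems 3 and 4 of Chen--Yano \cite{CY73} on submanifolds that are umbilical with respect to a \emph{nonparallel} normal direction; there is no computation in the paper's proof beyond the reduction to Theorem \ref{T:3.1}. What you do instead is re-derive the relevant portion of \cite{CY73} from scratch: writing $D_Z\xi_1=\theta(Z)\xi_2$, feeding $A_{\xi_1}=\mu\,\mathrm{Id}$ into the Codazzi equation, and using $\dim\ker\theta=n-1\ge 3$ to force $\mathrm{grad}\,\mu\parallel\theta^{\sharp}$ and $A_{\xi_2}|_{\ker\theta}=c\,\mathrm{Id}$; this correctly isolates the eigenvalue of multiplicity $\ge n-1$, and the Weyl-tensor argument (constant-curvature part from $\mu^2$ plus a Gauss-type term built from an operator of the form $c\,\mathrm{Id}+(d-c)\,e\otimes e$, both with vanishing Weyl part for $n\ge4$) is sound. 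Your approach buys a self-contained proof of conformal flatness at the cost of redoing classical work; the paper's citation buys brevity. The one place your sketch is genuinely thin is the final ``locus of $(n-1)$-spheres'' step: integrability of $\ker\theta$ and, more importantly, the umbilicity of its leaves as submanifolds of $\mathbb{E}^{n+2}$ require controlling the second fundamental form of the leaves \emph{inside} $M$, which does not follow merely from $A_{\xi_1},A_{\xi_2}$ being scalar on $\ker\theta$; one must run the Codazzi equation (with the $D$-correction terms) once more to show the leaf's shape operator in $M$ is also scalar. That is precisely the content of Theorem 4 of \cite{CY73}, so the gap is fillable, but as written it is asserted rather than proved.
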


\begin{proof}
Let $(M,g)$ be an $n$-dimensional submanifold of $\mathbb{E}^{n+2}$ with $%
n>3 $ and $\hbox{\bf x}^N\ne 0$. If the canonical vector field $\hbox{\bf x}%
^T$ of $M$ is a conformal vector field, it follows from Theorem \ref{T:3.1}
that $M$ is umbilical with respect the normal direction $\hbox{\bf x}^N$.

If ${\hbox{\bf x}^N}/{|\hbox{\bf x}^N|}$ is a parallel normal section,
Corollary \ref{C:3.1} implies that $M$ lies in a hyperplane or in a
hypersphere of $\mathbb{E}^{n+2}$.

If ${\hbox{\bf x}^N}/{|\hbox{\bf x}^N|}$ is a nonparallel normal section, it
follows from \cite[Theorem 3]{CY73} that $(M,g)$ is a conformally flat
space. Moreover, in this case it also follows from \cite[Theorem 4]{CY73}
that the submanifold is a locus of $(n-1)$-spheres in $\mathbb{E}^{n+1}$.
\end{proof}

\section{Application to Yamabe solitons}

The Yamabe flow was introduced by R. Hamilton at the same time as the Ricci
flow (cf. \cite{H98}). It deforms a given manifold by evolving its metric
according to 
\begin{align}  \label{4.1}
\frac{\partial}{\partial t}g(t)=-R(t) g(t),
\end{align}
where $R(t)$ denotes the scalar curvature of the metric $g(t)$. Yamabe
solitons correspond to self-similar solutions of the Yamabe flow.

A Riemannian manifold $(M, g)$ is called a \textit{Yamabe soliton} if it
admits a vector field $v$ such that 
\begin{equation}  \label{4.2}
\frac{1}{2}{\mathcal{L}}_{v}g= (R-\lambda) g,
\end{equation}
where $\lambda$ is a real number. The vector field $v$ as in the definition
is called a \textit{soliton vector field} for $(M, g)$. We denote the Yamabe
soliton satisfying \eqref{4.2} by $(M,g,v,\lambda)$.

By applying Theorem \ref{T:3.1} we have the following.

\begin{corollary}
If a Euclidean submanifold $(M,g)$ of $\mathbb{E}^{m}$ is a Yamabe soliton
with the canonical vector field $\hbox{\bf x}^T$ as its soliton vector
field, then $\hbox{\bf x}^T$ is a conformal vector field.
\end{corollary}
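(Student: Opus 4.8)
The plan is to read off the conclusion directly from the two defining equations involved, since being a Yamabe soliton with soliton vector field $\hbox{\bf x}^T$ is, up to naming a specific potential function, literally the conformal condition. First I would recall the Yamabe soliton equation \eqref{4.2} applied to $v=\hbox{\bf x}^T$, namely
\begin{align}
\tfrac12{\mathcal{L}}_{\hbox{\bf x}^T} g=(R-\lambda)g, \notag
\end{align}
where $R$ is the scalar curvature of $(M,g)$ and $\lambda\in\mathbb{R}$ is the soliton constant.

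Next I would multiply both sides by $2$ to obtain ${\mathcal{L}}_{\hbox{\bf x}^T} g=2(R-\lambda)g$, and then compare this with the definition of a conformal vector field, ${\mathcal{L}}_v g=2\varphi g$. Setting $\varphi:=R-\lambda$ exhibits $\hbox{\bf x}^T$ as a conformal vector field whose potential function is $\varphi$. The only point requiring a word is that $\varphi$ is an admissible potential, i.e.\ a genuine smooth function on $M$; this is clear because the scalar curvature $R$ is smooth and $\lambda$ is a constant, so $R-\lambda$ is smooth.

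The hard part is that there is essentially no hard part: the statement is a definitional consequence and no geometry of the immersion $x:M\to\mathbb{E}^m$ is actually used at this stage. Accordingly, I would point out that the role of Theorem \ref{T:3.1} lies not in establishing conformality itself but in what it yields once conformality is known: combining this corollary with Theorem \ref{T:3.1} shows that such a Yamabe soliton submanifold is automatically umbilical with respect to the normal component $\hbox{\bf x}^N$, with umbilical function $\varphi-1=R-\lambda-1$, in the sense of \eqref{3.7}.
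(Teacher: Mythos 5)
Your proof is correct, and it is in fact more direct than the one in the paper. You simply unwind the definitions: the Yamabe soliton equation \eqref{4.2} with $v=\hbox{\bf x}^T$ reads $\tfrac12{\mathcal L}_{\hbox{\bf x}^T}g=(R-\lambda)g$, so ${\mathcal L}_{\hbox{\bf x}^T}g=2\varphi g$ with $\varphi:=R-\lambda$, which is precisely the conformality condition; no property of the immersion is needed. The paper instead takes a detour: it invokes \cite[Theorem 3.1]{CD17} to deduce that the soliton hypothesis forces $\tilde g(h(V,W),\hbox{\bf x}^N)=(R-\lambda-1)g(V,W)$, i.e.\ that $M$ is umbilical with respect to $\hbox{\bf x}^N$, and then applies the ``if'' direction of Theorem \ref{T:3.1} to conclude conformality. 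The two routes are logically equivalent here (Theorem \ref{T:3.1} is an equivalence), but yours isolates the fact that the conclusion is purely definitional, while the paper's version has the side benefit of making the umbilicity relation explicit along the way --- the same geometric consequence you correctly record at the end of your argument by feeding the conformality back into \eqref{3.7}. Either write-up is acceptable; yours is the more economical.
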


\begin{proof}
Assume that Euclidean submanifold $(M,g)$ of the Euclidean $m$-space $%
\mathbb{E}^{m}$ is a Yamabe soliton with its canonical vector field $%
\hbox{\bf x}^T$ as the soliton vector field. Then it follows from \cite[%
Theorem 3.1]{CD17} that the second fundamental form $h$ of $M$ satisfies 
\begin{align}  \label{4.3}
\tilde g(h(V,W),\hbox{\bf x}^N) =(R-\lambda-1)g(V,W)
\end{align}
for vectors $V,W$ tangent to $M$, where $R$ is the scalar curvature of $M$
and $\lambda$ is a constant. Hence $M$ is umbilical with respect to $%
\hbox{\bf x}^N$. Consequently, the canonical vector field $\hbox{\bf x}^T$
is a conformal vector field of $M$ according to Theorem \ref{T:3.1}.
\end{proof}

\section{Application to generalized self-similar submanifolds}

Consider the mean curvature flow for an isometric immersion $x:M\to \mathbb{E%
}^m$, that is, consider a one-parameter family $x_t=x(\,\cdot\,, t)$ of
immersions $x_t: M\to \mathbb{E}^{m}$ such that 
\begin{align}  \label{5.1}
\frac{d}{dt}x(p,t)=H(p,t),\;\; x(p,0)=x(p),\;\; p\in M.
\end{align}
is satisfied, where $H(p,t)$ is the mean curvature vector of $M_t$ in $%
\mathbb{E}^m$ at $x(p,t)$.

An important class of solutions to the mean curvature flow equations are 
\textit{self-similar shrinkers} which satisfy a system of quasi-linear
elliptic PDEs of the second order, namely, 
\begin{equation}  \label{5.2}
H=-\hbox{\bf x}^{N},
\end{equation}
where $\hbox{\bf x}^{N}$ is the normal component of the position vector
field of $x:M\rightarrow \mathbb{E}^{m}$ as before. Self-shrinkers play an
important role in the study of the mean curvature flow because they describe
all possible blow up at a given singularity of a mean curvature flow.

In view of \eqref{5.2}, we simply call a Euclidean submanifold $M$ a \textit{%
generalized self-similar submanifold} if it satisfies 
\begin{equation}  \label{5.3}
\hbox{\bf x}^{N}=f H
\end{equation}
for some function $f$.

Obviously, it follows from \eqref{5.3} that every Euclidean hypersurface is
a generalized self-similar hypersurface automatically.

By applying Theorem \ref{T:3.1} we have the following.

\begin{corollary}
Let $M$ be a generalized self-similar submanifold of the Euclidean $m$-space 
$\mathbb{E}^m$. Then the canonical vector field of $M$ is a conformal vector
field if and only if $M$ is a pseudo-umbilical submanifold.
\end{corollary}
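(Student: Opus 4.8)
The plan is to combine the definition of a generalized self-similar submanifold with the characterization from Theorem~\ref{T:3.1}. The key observation is that for such a submanifold the normal component $\hbox{\bf x}^N$ of the position vector is, by \eqref{5.3}, a scalar multiple $fH$ of the mean curvature vector $H$. Since umbilicity with respect to a normal vector depends only on the direction of that vector (the umbilicity condition \eqref{2.6} is linear in $\xi$, so multiplying $\xi$ by a nowhere-zero scalar preserves it), being umbilical with respect to $\hbox{\bf x}^N$ should be essentially equivalent to being umbilical with respect to $H$, which is precisely the pseudo-umbilical condition.

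First I would invoke Theorem~\ref{T:3.1} to reduce the statement: the canonical vector field $\hbox{\bf x}^T$ is conformal if and only if $M$ is umbilical with respect to $\hbox{\bf x}^N$, i.e.\ $\tilde g(h(X,Y),\hbox{\bf x}^N)=\mu\,g(X,Y)$ for some function $\mu$. Next I would substitute \eqref{5.3} to rewrite the left-hand side as $\tilde g(h(X,Y),fH)=f\,\tilde g(h(X,Y),H)$. So the condition becomes $f\,\tilde g(h(X,Y),H)=\mu\,g(X,Y)$. On the locus where $f\neq 0$ this is exactly the statement that $M$ is umbilical with respect to $H$ (with $\mu/f$ as the new function), i.e.\ pseudo-umbilical, giving the equivalence directly.

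The step I expect to require the most care is handling the possibility that $f$ vanishes somewhere, since the argument above divides by $f$. One clean way around this is to note that on the set where $f=0$ we have $\hbox{\bf x}^N=0$, so $M$ is automatically umbilical with respect to $\hbox{\bf x}^N$ (with $\mu=0$) and also automatically pseudo-umbilical in the degenerate sense, so the two conditions agree trivially there as well; alternatively one may simply state the result under the natural assumption $\hbox{\bf x}^N\neq 0$, matching the hypotheses used in the preceding corollaries. Given the phrasing of the paper, I suspect the intended argument treats $f$ as nowhere zero (otherwise $\hbox{\bf x}^N=0$ identically and the submanifold sits in a linear subspace through the origin), so the main obstacle is really just making the role of $f$ precise rather than any substantive geometric difficulty. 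With that addressed, the forward and backward implications both fall out immediately from the linear substitution $\hbox{\bf x}^N=fH$ into the umbilicity condition supplied by Theorem~\ref{T:3.1}.
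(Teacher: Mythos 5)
Your proposal is correct and follows essentially the same route as the paper: invoke Theorem~\ref{T:3.1} to translate conformality of $\hbox{\bf x}^T$ into umbilicity with respect to $\hbox{\bf x}^N$, then substitute $\hbox{\bf x}^N=fH$ from \eqref{5.3} to identify that condition with pseudo-umbilicity. One small caution about your side remark: on a locus where $f=0$ the submanifold is \emph{not} ``automatically pseudo-umbilical'' (since $H$ may be nonzero with $A_H$ not proportional to the identity), so the forward implication genuinely needs $f\neq 0$ there --- but the paper's own proof silently makes the same division by $f$, so your treatment is if anything more careful than the original.
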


\begin{proof}
Let $M$ be a generalized self-similar submanifold of $\mathbb{E}^m$. Then we
have \eqref{5.3}. If the canonical vector field of $M$ is a conformal vector
field, then \eqref{3.7} holds for some function $\varphi$. Clearly, it
follows from \eqref{3.7} and \eqref{5.3} that $M$ is pseudo-umbilical.

Conversely, if $M$ is pseudo-umbilical, then \eqref{5.3} implies that $M$ is
umbilical with respect $\hbox{\bf x}^N$. Hence Theorem \ref{3.1} implies
that the canonical vector field of $M$ is a conformal vector field.
\end{proof}

\section{Three global results on complete submanifolds with conformal canonical vector field}

Recall that Euclidean submanifolds in this article are assumed to be
connected (see Preliminaries). In this article, by a \textit{complete
submanifold} of $\mathbb{E}^m$ we mean a complete Riemannian manifold
isometrically immersed in $\mathbb{E}^m$.

\begin{theorem}
\label{T:6.1} Suppose that the canonical vector field $\mathbf{x}^{T}$ on a
complete submanifold $M$ of $\,\mathbb{E}^{m}$ is non-parallel and
conformal. If $\mathbf{x}^{T}$ satisfies 
\begin{equation*}
\Delta \mathbf{x}^{T}=-\lambda \mathbf{x}^{T}
\end{equation*}%
for a non-negative constant $\lambda $, then either $M$ is isometric to an $%
n $-sphere ${S}^{n}(c)$ or to the Euclidean space $\mathbb{E}^{n}$ with $%
n=\dim M$.
\end{theorem}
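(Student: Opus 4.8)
The plan is to use Theorem \ref{T:3.1} to convert the conformal hypothesis into a concircular condition on $\mathbf{x}^{T}$, and then to feed the eigenvalue equation $\Delta\mathbf{x}^{T}=-\lambda\mathbf{x}^{T}$ into an Obata--Tashiro type rigidity argument. First I would observe that, since $\mathbf{x}^{T}$ is conformal, Theorem \ref{T:3.1} gives that $M$ is umbilical with respect to $\mathbf{x}^{N}$; writing $\varphi$ for the potential function, \eqref{3.7} reads $g(h(X,Y),\mathbf{x}^{N})=(\varphi-1)g(X,Y)$, so by \eqref{2.3} the shape operator satisfies $A_{\mathbf{x}^{N}}=(\varphi-1)\,\mathrm{Id}$. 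Substituting this into \eqref{3.1} yields the key identity
\begin{equation*}
\nabla_{Z}\mathbf{x}^{T}=\varphi\,Z\qquad\text{for all }Z\in TM,
\end{equation*}
that is, $\mathbf{x}^{T}$ is a concircular vector field with potential $\varphi$.

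Next I would compute the Laplacian of $\mathbf{x}^{T}$ directly from the definition of the Laplace operator given in the Preliminaries. Using $\nabla_{e_{i}}\mathbf{x}^{T}=\varphi e_{i}$ one finds that the terms involving $\nabla_{e_{i}}e_{i}$ cancel, leaving $\Delta\mathbf{x}^{T}=\sum_{i}(e_{i}\varphi)e_{i}=\mathrm{grad}\,\varphi$. Combining this with the hypothesis $\Delta\mathbf{x}^{T}=-\lambda\mathbf{x}^{T}$ gives $\mathrm{grad}\,\varphi=-\lambda\,\mathbf{x}^{T}$. Differentiating this relation and once more invoking $\nabla_{Z}\mathbf{x}^{T}=\varphi Z$ produces the second-order equation
\begin{equation*}
\nabla^{2}\varphi=-\lambda\,\varphi\,g,
\end{equation*}
which is precisely the Obata equation when $\lambda>0$.

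I would then split into two cases according to the value of $\lambda$. If $\lambda>0$, I first note that $\varphi$ cannot be constant: a constant $\varphi$ would force $\mathrm{grad}\,\varphi=0$, hence $\mathbf{x}^{T}\equiv 0$ and $\nabla\mathbf{x}^{T}\equiv 0$, contradicting the assumption that $\mathbf{x}^{T}$ is non-parallel. With $\varphi$ non-constant and $M$ complete, Obata's theorem applied to $\varphi$ yields that $(M,g)$ is isometric to a sphere $S^{n}(c)$ of constant curvature $\lambda$. If $\lambda=0$, then $\mathrm{grad}\,\varphi=0$, so $\varphi$ is a constant, which must be non-zero (again because $\mathbf{x}^{T}$ is non-parallel). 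Introducing $\rho=\tfrac12\langle\mathbf{x},\mathbf{x}\rangle$ on $M$, one checks $\mathrm{grad}\,\rho=\mathbf{x}^{T}$ and hence $\nabla^{2}\rho=\varphi g$ with $\varphi$ a non-zero constant; Tashiro's characterization of complete Riemannian manifolds carrying a non-constant function whose Hessian is a non-zero constant multiple of the metric then gives $M\cong\mathbb{E}^{n}$.

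The routine part is the identity $\nabla_{Z}\mathbf{x}^{T}=\varphi Z$ together with the Laplacian computation, both of which are short once Theorem \ref{T:3.1} is in hand. The main point requiring care is the case analysis: one must use the non-parallel hypothesis precisely to exclude the degenerate possibility $\varphi\equiv 0$, and to rule out a non-zero constant $\varphi$ when $\lambda>0$, so that the Obata equation (for $\lambda>0$) and the constant-Hessian equation (for $\lambda=0$) each admit a genuinely non-constant solution. The conclusion then follows from the standard Obata and Tashiro rigidity theorems, and it is only in verifying these non-degeneracy alternatives that the hypotheses are used sharply.
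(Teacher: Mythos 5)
Your proposal is correct and follows essentially the same route as the paper: both reduce to $\nabla_Z\mathbf{x}^T=\varphi Z$ via Theorem \ref{T:3.1}, compute $\Delta\mathbf{x}^T=\nabla\varphi$ to obtain $\nabla\varphi=-\lambda\mathbf{x}^T$ and hence the Obata equation $\nabla^2\varphi=-\lambda\varphi\, g$, and then split into the Obata (sphere) case and the constant-potential, constant-Hessian (Euclidean) case. The only cosmetic differences are that you bypass the paper's preliminary curvature-tensor computation establishing $\nabla\varphi\parallel\mathbf{x}^T$, and in the flat case you use $\tfrac12\langle\mathbf{x},\mathbf{x}\rangle$ with Tashiro's theorem where the paper uses $\tfrac12|\mathbf{x}^T|^2$ with the Pigola--Rimoldi--Setti result.
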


\begin{proof}
Suppose that the canonical vector field $\mathbf{x}^{T}$ is a non-parallel,
conformal vector field satisfying 
\begin{align}  \label{6.2}
{\mathcal{L}}_{\hbox{\bf x}^T} g=2\varphi g
\end{align}
for some function $\varphi$. Using equation \eqref{3.1}, we compute the
curvature tensor of the submanifold as 
\begin{equation*}
R(X,Y)\mathbf{x}^{T}=\left( \nabla A_{\mathbf{x}^{N}}\right) (X,Y)-\left(
\nabla A_{\mathbf{x}^{N}}\right) (Y,X)\text{,}
\end{equation*}%
where the covariant derivative 
\begin{equation*}
\left( \nabla A_{\mathbf{x}^{N}}\right) (X,Y)=\nabla _{X}A_{\mathbf{x}%
^{N}}Y-A_{\mathbf{x}^{N}}\nabla _{X}Y.
\end{equation*}
Using \eqref{6.2} and equation \eqref{3.7}, we compute 
\begin{equation*}
R(X,Y)\mathbf{x}^{T}=(X\varphi )Y-(Y\varphi )X\text{.}
\end{equation*}%
Taking inner product with $\mathbf{x}^{T}$ in above equation, we get 
\begin{equation*}
(X\varphi )g(Y,\mathbf{x}^{T})=(Y\varphi )g(X,\mathbf{x}^{T})\text{,}
\end{equation*}%
that is, $(X\varphi )\mathbf{x}^{T}=g(X,\mathbf{x}^{T})\nabla \varphi $,
where $\nabla \varphi $ is the gradient of the function $\varphi $. The last
relation shows that the vector fields $\nabla \varphi $ and $\mathbf{x}^{T}$
are parallel. Hence there exists a smooth function $\beta $ such that%
\begin{equation}
\nabla \varphi =\beta \mathbf{x}^{T}\text{.}  \label{6.3}
\end{equation}%
Now, using \eqref{3.1}, we compute%
\begin{equation*}
\Delta \mathbf{x}^{T}=\sum\limits_{i}\left( \nabla A_{\mathbf{x}^{N}}\right)
(e_{i},e_{i})\text{,}
\end{equation*}%
which in view of equation \eqref{3.7} gives $\Delta \mathbf{x}^{T}=\nabla
\varphi $, which in view of \eqref{6.3}, yields $\Delta \mathbf{x}^{T}=\beta 
\mathbf{x}^{T}$. Using the condition in the statement, we get $\beta
=-\lambda $. Thus equation \eqref{6.3} gives%
\begin{equation}
\nabla \varphi =-\lambda \mathbf{x}^{T}\text{,}  \label{6.4}
\end{equation}%
which in view of \eqref{3.1}, gives%
\begin{equation}
\nabla _{X}\nabla \varphi =-\lambda \left( X+A_{\mathbf{x}^{N}}X\right)
=-\lambda \varphi X\text{,}  \label{6.5}
\end{equation}%
where we have used \eqref{3.8}. If $\varphi $ is not a constant, then
equation \eqref{6.4} insures that $\lambda $ is a positive constant (since $%
\mathbf{x}^{T}\neq {0}$ being a non-parallel vector). Thus, equation %
\eqref{6.5} is Obata's differential equation, which
proves that $M$ is isometric to ${S}^{n}(\sqrt{\lambda })$ (cf. \cite{Obata}).

If $\varphi $ is a constant, then the function%
\begin{equation*}
f=\frac{1}{2}|\mathbf{x}^{T}|^{2}\text{,}
\end{equation*}%
on using equations \eqref{3.1} and \eqref{3.7} gives%
\begin{equation*}
Xf=g(X+A_{\mathbf{x}^{N}}X,\mathbf{x}^{T})=\varphi g(X,\mathbf{x}^{T})\text{,%
}
\end{equation*}%
that is, the gradient $\nabla f$ is given by%
\begin{equation}
\nabla f=\varphi \mathbf{x}^{T}\text{.}  \label{6.6}
\end{equation}%
Hence, the Hessian $H_{f}$ of the function $f$ is given by%
\begin{equation}
H_{f}(X,Y)=\varphi ^{2}g(X,Y)\text{.}  \label{6.7}
\end{equation}%
Note that if $f$ is a constant function, equation \eqref{6.6} would imply
either the constant $\varphi =0$ or $\mathbf{x}^{T}={0}$, and both in view
of equations \eqref{3.1} and \eqref{3.7} will imply that $\mathbf{x}^{T}$ is
a parallel vector field, which is contrary to our assumption in the
hypothesis. Hence $f$ is a non-constant function that satisfies equation %
\eqref{6.7} for nonzero constant $\varphi ^{2}$ implies that $M$ is
isometric to the Euclidean space $\mathbb{E}^{n}$ (cf. \cite[Theorem 1]{P11}).
\end{proof}

Next, we use the potential function $\varphi $ of the conformal canonical
vector field $\mathbf{x}^{T}$ and the support function $f$ in the definition %
\eqref{5.3} of generalized self-similar submanifold in proving the next
result.

\begin{theorem}
\label{T:6.2} Let $M$ be a generalized self-similar complete submanifold of
the Euclidean space $\mathbb{E}^{m}$. If the canonical vector field $\mathbf{%
x}^{T}$ is a conformal vector field satisfying 
\begin{equation*}
Ric(\mathbf{x}^{T},\mathbf{x}^{T})+\frac{n}{2}\left[ \mathbf{x}^{T}\!\varphi
+| H|^{2} (\mathbf{x}^{T}\! f)\right] \geq 0\text{,}\;\; n=\dim M,
\end{equation*}
then either $M$ is isometric to the Euclidean $n$-space $\mathbb{E}^{n}$ or
it is a submanifold of constant mean curvature of a hypersphere ${S}%
^{m-1}(c) $ of $\mathbb{E}^{m}$.
\end{theorem}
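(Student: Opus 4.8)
The plan is to exploit the integral-geometric machinery that naturally accompanies a conformal vector field on a complete manifold, combined with the fact that a generalized self-similar submanifold has $\mathbf{x}^N = fH$. Let me trace the structure of the known quantities so I can assemble the inequality in the hypothesis.

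Let me start from the setup. We have $\mathbf{x}^N = fH$ (generalized self-similar), and $\mathbf{x}^T$ is conformal with potential $\varphi$. From Theorem 3.1 and (3.7), $M$ is umbilical with respect to $\mathbf{x}^N$, so $g(h(X,Y),\mathbf{x}^N)=(\varphi-1)g(X,Y)$, which combined with $\mathbf{x}^N=fH$ gives $f\,g(h(X,Y),H)=(\varphi-1)g(X,Y)$, i.e. $M$ is pseudo-umbilical (this is the content of the Section 5 corollary). Taking traces, $f|H|^2 = \varphi-1$, a pointwise scalar identity linking $f$, $\varphi$ and $|H|^2$. I would also record (3.1), $\nabla_Z\mathbf{x}^T = Z + A_{\mathbf{x}^N}Z = \varphi Z$ (using umbilicity), so $\mathbf{x}^T$ is in fact a \emph{concircular} vector field with $\nabla_Z\mathbf{x}^T = \varphi Z$; this is the crucial simplification the hypothesis buys us.

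Okay, so the real engine is the standard Bochner-type integral formula for a conformal vector field. For any vector field $v$ with $\nabla_Z v = \varphi Z$ one has the pointwise identity relating $\operatorname{div}$ of a suitable vector field to $\operatorname{Ric}(v,v)$, $(\operatorname{div} v)$, $|\nabla v|^2$, and derivatives of $\varphi$ along $v$. Concretely, I expect to compute $\operatorname{div}\!\big((\nabla_{\mathbf{x}^T}\mathbf{x}^T) - (\operatorname{div}\mathbf{x}^T)\,\mathbf{x}^T\big)$ and integrate over the complete $M$; when the vector field inside is suitably integrable, the divergence integrates to zero, leaving an integral identity of the form $\int_M \big[\operatorname{Ric}(\mathbf{x}^T,\mathbf{x}^T) + (\text{terms in }\mathbf{x}^T\varphi, n, \varphi^2)\big] = 0$. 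The hypothesis integrand $\operatorname{Ric}(\mathbf{x}^T,\mathbf{x}^T)+\frac{n}{2}[\mathbf{x}^T\varphi + |H|^2(\mathbf{x}^T f)]$ is exactly what this divergence formula should produce once I substitute $\nabla_Z\mathbf{x}^T=\varphi Z$ (so $\operatorname{div}\mathbf{x}^T = n\varphi$ and $|\nabla\mathbf{x}^T|^2 = n\varphi^2$) and rewrite $\mathbf{x}^T\varphi$ using the scalar identity $f|H|^2=\varphi-1$ to generate the $|H|^2(\mathbf{x}^T f)$ term. The sign hypothesis then forces the nonnegative divergence-term (a square) to vanish, giving a rigid pointwise conclusion.

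The dichotomy in the conclusion is then a rigidity argument. Vanishing of the square term should force $\varphi$ to be constant, and the concircular equation $\nabla_Z\mathbf{x}^T=\varphi Z$ with constant $\varphi$ splits into two cases exactly as in Theorem 6.1: if $\varphi$ yields Obata-type data we land on a sphere, and the pseudo-umbilical condition with $f|H|^2 = \varphi-1$ constant forces constant mean curvature so that $M$ sits in a hypersphere $S^{m-1}(c)$; the degenerate case $\varphi\equiv 1$ (equivalently $\mathbf{x}^N$ contributing nothing to the umbilicity shift) collapses $\mathbf{x}^T$ to a concurrent field on flat space, giving $\mathbb{E}^n$. \textbf{The main obstacle} I anticipate is the completeness/integrability bookkeeping: to kill the divergence term I must justify that the relevant vector field is integrable (e.g. via a cutoff/Gaffney-type argument on the complete, possibly noncompact $M$), and then to extract constancy of $\varphi$ cleanly rather than merely an integral identity. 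Getting the Ricci-curvature rewriting to match the stated integrand coefficient $\frac{n}{2}$ exactly — tracking how $\operatorname{div}\mathbf{x}^T=n\varphi$ and the self-similar substitution $\varphi-1=f|H|^2$ interact — is the place where the constants must be checked with care.
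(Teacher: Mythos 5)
Your proposal assembles the right preliminary identities ($\varphi = 1 + f|H|^2$ from combining \eqref{3.7} with $\mathbf{x}^N = fH$, and $A_{\mathbf{x}^N} = (\varphi-1)\,\mathrm{id}$ so that $\nabla_Z\mathbf{x}^T=\varphi Z$), and the endgame you sketch (a vanishing square forces rigidity, then $\tfrac12|\mathbf{x}^T|^2$ separates the Euclidean case from the hypersphere case) is in the right spirit. But the central mechanism you propose does not work as stated, for two reasons. First, the integral/divergence route requires $\int_M \operatorname{div}(\cdot)=0$ on a complete, possibly noncompact manifold, which needs integrability hypotheses the theorem does not supply; you flag this yourself as ``the main obstacle,'' but it is not bookkeeping --- it would be the entire difficulty, and the actual argument avoids it completely. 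Second, and more decisively, for a concircular field the intrinsic Bochner-type identity degenerates to $Ric(\mathbf{x}^T,\mathbf{x}^T)=-(n-1)\,\mathbf{x}^T\varphi$ (cf. \eqref{6.14}); it contains no square term and no $|H|^2(\mathbf{x}^T f)$ term, so it cannot reproduce the hypothesis integrand. The square that does the work is extrinsic: it is $\sum_i\|h(e_i,\mathbf{x}^T)\|^2$, coming from the Gauss-equation expression $Ric(\mathbf{x}^T,\mathbf{x}^T)=n\,g(H,h(\mathbf{x}^T,\mathbf{x}^T))-\sum_i\|h(e_i,\mathbf{x}^T)\|^2$, which your sketch never invokes.

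The proof in the paper is pointwise and hinges on one computation you have not made: differentiating $\mathbf{x}^N=fH$ via \eqref{3.2} gives $-h(X,\mathbf{x}^T)=(Xf)H+fD_XH$, hence $X\varphi=-(Xf)|H|^2-2g(H,h(X,\mathbf{x}^T))$. Setting $X=\mathbf{x}^T$ and substituting into the Gauss-equation formula above yields the pointwise identity $\mathbf{x}^T\varphi+|H|^2(\mathbf{x}^T f)+\frac{2}{n}Ric(\mathbf{x}^T,\mathbf{x}^T)=-\frac{2}{n}\sum_i\|h(e_i,\mathbf{x}^T)\|^2$; the left side is nonnegative by hypothesis and the right side is manifestly nonpositive, so $h(X,\mathbf{x}^T)=0$ everywhere --- no integration, no cutoff. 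From there $Ric(\cdot,\mathbf{x}^T)=0$, so \eqref{6.14} forces $\varphi$ constant and then $(Xf)|H|^2=0$; the final dichotomy is governed by whether $\tfrac12|\mathbf{x}^T|^2$ is constant (then $|\mathbf{x}|$ is constant, $M$ lies in a hypersphere, and $|H|$ is constant) or not (then the Hessian rigidity theorem of Pigola--Rimoldi--Setti gives $\mathbb{E}^n$), not by the value of $\varphi$ as you suggest.
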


\begin{proof}
Equation \eqref{3.7} gives $ng(H,\mathbf{x}^{N})=n(\varphi -1)$, which in
view of equation \eqref{5.3} yields
\begin{equation}
\varphi =1+f| H|^{2}\text{.}  \label{6.8}
\end{equation}%
Taking covariant derivative in equation (5.3) and using \eqref{3.2}, we get 
\begin{equation}
-h(X,\mathbf{x}^{T})=(Xf)H+fD_{X}H\text{.}  \label{6.9}
\end{equation}%
Now, using equations \eqref{6.8} and \eqref{6.9}, we have 
\begin{eqnarray}
X\varphi &=&(Xf)|H|^{2}+2fg\left( D_{X}H,H\right)  \notag \\
&=&-(Xf)|H|^{2}-2g(H,h(X,\mathbf{x}^{T}))\text{.}  \label{6.10}
\end{eqnarray}

Recall that the expression for Ricci tensor of a submanifold derived from
Gauss' equation gives 
\begin{equation}
Ric(\mathbf{x}^{T},\mathbf{x}^{T})=ng(H,h(\mathbf{x}^{T},\mathbf{x}
^{T}))-\sum\limits_{i}\left\Vert h(e_{i},\mathbf{x}^{T})\right\Vert ^{2} 
\text{,}  \label{6.11}
\end{equation}%
where $\{e_{1},e_{2},...,e_{n}\}$ is a local orthonormal frame on $M$.

Inserting equation \eqref{6.11} in equation \eqref{6.10} gives%
\begin{equation*}
\mathbf{x}^{T}\varphi +|H|^{2}(\mathbf{x}^{T}f) +\frac{2}{n}Ric(\mathbf{x}%
^{T},\mathbf{x}^{T})=-\frac{2}{n}\sum\limits_{i}\left\Vert h(e_{i},\mathbf{x}%
^{T})\right\Vert ^{2}\text{,}
\end{equation*}%
which in view of the condition in the hypothesis gives%
\begin{equation}
h(X,\mathbf{x}^{T})=0,  \label{6.12}
\end{equation}
for $X$ tangent to $M$. Now, using equation \eqref{6.12}, we find 
\begin{equation}
Ric(X,\mathbf{x}^{T})=0\text{.}  \label{6.13}
\end{equation}%
However, using \eqref{3.1} and \eqref{3.7}, we have $R(X,Y)\mathbf{x}%
^{T}=(X\varphi )Y-(Y\varphi )X$, which gives 
\begin{align}  \label{6.14}
Ric(Y,\mathbf{x}^{T})=-(n-1)(Y\varphi ).
\end{align}
Thus, in view of equation \eqref{6.13}, $\varphi $ is a constant. Therefore
equation \eqref{6.10} implies that
\begin{equation}
(Xf)| H|^{2}=0\text{.}  \label{6.15}
\end{equation}

Now, define a function 
\begin{equation*}
F=\frac{1}{2}|\mathbf{x}^{T}|^{2}\text{,}
\end{equation*}
which has gradient $\nabla F=\varphi \mathbf{x}^{T}$ and Hessian
\begin{equation*}
H_{F}(X,Y)=\varphi ^{2}g(X,Y)\text{.}
\end{equation*}

If $F$ is not a constant, then as $\nabla F=\varphi \mathbf{x}^{T}$,
constant $\varphi ^{2}$ is nonzero, then $M$ is isometric to the Euclidean
space $\mathbb{E}^{n}$ (cf. \cite{P11}).

If $F$ is a constant, then $|\mathbf{x}^{T}|$ is constant and equations 
\eqref{3.2} and \eqref{6.12} give $| \mathbf{x}^{N}|$ is constant.
Consequently, $|\mathbf{x}|$ is a constant and this proves $M$ is a
submanifold of the hypersphere ${S}^{m-1}(c)$. Now, equation \eqref{6.15}
gives $(Xf)|H|^{2}=0$, so either $H=\mathbf{0}$ or $f$ is a constant.

Now, we claim that $f$ is a nonzero constant, for if $f=0$, then equation 
\eqref{5.3} will give $\mathbf{x}^{N}=0$, which by equation \eqref{3.1}
implies $\nabla _{X}\mathbf{x}^{T}=X$, and as $|\mathbf{x}^{T}|$ is a
constant, we get $g(X,\mathbf{x}^{T})=0$ for any smooth vector field $X$
tangent to $M$, that is, $\mathbf{x}^{T}=0$, that is, $\mathbf{x}={0}$ and
it is a contradiction. Therefore $f$ is a nonzero constant. Consequently,
equation \eqref{5.3} implies that $|H|$ is constant.
\end{proof}

Recall that a normal vector field $\xi$ to a Euclidean submanifold $M$ is said to be {\it parallel along a smooth curve} $\gamma :I\rightarrow M$ if
$D_{\gamma ^{{\prime }}} \xi \equiv 0.$ Also, a smooth function $f:M\rightarrow R$ is {\it constant along} $\gamma $ if $\gamma ^{{\prime }}f\equiv 0$.

For a totally geodesic $n$-space $\mathbb{E}^{n}$ of $\mathbb{E}^{m}$, it is
known that the canonical vector field $\mathbf{x}^{T}$ is a concurrent
vector field satisfying 
\begin{equation}\label{6.15}
\nabla _{X}\mathbf{x}^{T}=X.
\end{equation}
Hence the canonical vector field $\hbox{\bf x}^{T}$ is a non-parallel vector
field. Also, it follows from \eqref{3.3} and \eqref{6.15} that $\mathcal{L}_{
\hbox{\bf x}^{T}}g=2g$. Thus the canonical vector field $\hbox{\bf x}^{T}$ is a conformal vector field
with constant potential $\varphi =1$. Furthermore, the mean curvature vector
field $H$ of $\mathbb{E}^{n}$ is zero vector which is trivially a parallel normal vector field. 

Conversely, we prove the following.

\begin{theorem}
\label{T:6.3} Let $M$ be a complete submanifold of $\mathbb{E}^{m}$ whose
canonical vector field $\hbox{\bf x}^{T}$ is non-parallel and conformal. If
the potential function $\varphi $ of $\hbox{\bf x}^{T}$ is constant along
the integral curves of $\hbox{\bf x}^{T}$ and the mean curvature vector
field $H$ of $M$ is parallel along the integral curves of $\hbox{\bf
x}^{T}$, then $M$ is isometric to a Euclidean space.
\end{theorem}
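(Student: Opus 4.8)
The plan is to drive the rigidity machinery from the proof of Theorem \ref{T:6.1}, but to feed it entirely through the ``constant along the flow'' hypothesis on the potential function. First I would record the consequence of conformality supplied by Theorem \ref{T:3.1}: since $\mathbf{x}^T$ is conformal, $M$ is umbilical with respect to $\mathbf{x}^N$, so \eqref{3.7} reads $A_{\mathbf{x}^N}X=(\varphi-1)X$, and substituting this into \eqref{3.1} collapses the Gauss formula to the clean relation $\nabla_X\mathbf{x}^T=\varphi X$ for every $X$ tangent to $M$. The non-parallel hypothesis then says precisely that $\varphi\not\equiv 0$, a fact I will need at the very end to secure a positive constant.

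The core step is to upgrade $\varphi$ to a (nonzero) constant. For this I would compute $R(X,Y)\mathbf{x}^T$ exactly as in Theorem \ref{T:6.1}; from $\nabla_X\mathbf{x}^T=\varphi X$ one obtains $R(X,Y)\mathbf{x}^T=(X\varphi)Y-(Y\varphi)X$. Pairing this with $\mathbf{x}^T$ and using the skew-symmetry $g(R(X,Y)\mathbf{x}^T,\mathbf{x}^T)=0$ yields $(X\varphi)\,g(Y,\mathbf{x}^T)=(Y\varphi)\,g(X,\mathbf{x}^T)$, i.e. $\nabla\varphi$ is pointwise proportional to $\mathbf{x}^T$, say $\nabla\varphi=\beta\mathbf{x}^T$. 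Now the hypothesis that $\varphi$ is constant along the integral curves of $\mathbf{x}^T$ is exactly $\mathbf{x}^T\varphi=g(\nabla\varphi,\mathbf{x}^T)=0$, whence $\beta|\mathbf{x}^T|^2=0$ and therefore $|\nabla\varphi|^2=\beta\,(\mathbf{x}^T\varphi)=0$. Thus $\nabla\varphi=0$ and, since $M$ is connected, $\varphi$ is constant; by non-parallelism it is a nonzero constant.

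With $\varphi$ a nonzero constant I would finish as in the second half of Theorem \ref{T:6.1}. Setting $f=\tfrac12|\mathbf{x}^T|^2$ and using $\nabla_X\mathbf{x}^T=\varphi X$ gives $\nabla f=\varphi\mathbf{x}^T$, and then, since $\varphi$ is constant, the Hessian is $\mathrm{Hess}\,f=\varphi^2 g$ with $\varphi^2$ a positive constant. The function $f$ cannot be constant, for otherwise $\varphi\mathbf{x}^T=\nabla f=0$ would force $\mathbf{x}^T\equiv 0$ and hence $\nabla\mathbf{x}^T=0$, contradicting non-parallelism. A complete manifold carrying a non-constant function whose Hessian is a positive-constant multiple of the metric is isometric to $\mathbb{E}^n$ (the Tashiro-type characterization already invoked in Theorem \ref{T:6.1}, cf. \cite{P11}), which is the desired conclusion.

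The main obstacle I anticipate is the passage from ``$\nabla\varphi=0$ on the open set where $\mathbf{x}^T\neq 0$'' to ``$\varphi$ constant on all of $M$'': the shortcut $\nabla\varphi=\beta\mathbf{x}^T$ with $\beta$ globally smooth is transparent only off the zero locus $Z=\{\mathbf{x}^T=0\}$, so one must separately confirm that $\nabla\varphi$ vanishes on $Z$. This turns out harmless---on $\overline{\{\mathbf{x}^T\neq 0\}}$ it follows by continuity of $\nabla\varphi$, while on the interior of $Z$ the relation $\nabla_X\mathbf{x}^T=\varphi X$ forces $\varphi\equiv 0$ and hence $\nabla\varphi=0$---but it is the one point requiring care. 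I would also note that this route uses only the potential-function hypothesis; the assumption that $H$ be parallel along the integral curves of $\mathbf{x}^T$ does not appear to be needed here, so I expect it to enter only in an alternative derivation of the constancy of $\varphi$, or to be used to control the zero locus $Z$, rather than being essential to the conclusion.
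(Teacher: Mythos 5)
Your proof is correct, but it reaches the key intermediate fact --- that $\varphi$ is constant --- by a genuinely different route from the paper's. The paper uses the hypothesis $D_{\mathbf{x}^T}H=0$: differentiating $g(H,\mathbf{x}^N)=\varphi-1$ along $\mathbf{x}^T$ via \eqref{3.2} gives $g(H,h(\mathbf{x}^T,\mathbf{x}^T))=0$, which together with $Ric(\mathbf{x}^T,\mathbf{x}^T)=0$ and the Gauss-equation expression \eqref{6.11} forces $h(X,\mathbf{x}^T)=0$, hence $Ric(X,\mathbf{x}^T)=0$, and then \eqref{6.14} yields $X\varphi=0$. You instead note that conformality makes $\mathbf{x}^T$ concircular ($A_{\mathbf{x}^N}=(\varphi-1)\operatorname{id}$ by \eqref{3.7}, so $\nabla_X\mathbf{x}^T=\varphi X$ by \eqref{3.1}), contract $R(X,Y)\mathbf{x}^T=(X\varphi)Y-(Y\varphi)X$ against $\mathbf{x}^T$ to get $(X\varphi)\,|\mathbf{x}^T|^2=(\mathbf{x}^T\varphi)\,g(X,\mathbf{x}^T)$, and use only the hypothesis $\mathbf{x}^T\varphi=0$; both arguments then coincide in the final Hessian step via \cite[Theorem 1]{P11}. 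Your route buys a sharper statement: the assumption that $H$ is parallel along the integral curves of $\mathbf{x}^T$ is redundant for the stated conclusion, since the paper uses it only to reach the constancy of $\varphi$, which you obtain without it. The price is the division by $|\mathbf{x}^T|^2$, i.e.\ the zero locus $Z=\{\mathbf{x}^T=0\}$, which you handle correctly: $\nabla\varphi$ vanishes on $\overline{\{\mathbf{x}^T\neq0\}}$ by continuity and on $\operatorname{int}Z$ because $\nabla_X\mathbf{x}^T=\varphi X$ forces $\varphi=0$ there, and these two sets cover the connected manifold $M$ (indeed, if $\operatorname{int}Z\neq\emptyset$ the resulting constant would be $0$, contradicting non-parallelism). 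The paper's Ricci-based route sidesteps this issue entirely, since $Ric(X,\mathbf{x}^T)=0$ combined with \eqref{6.14} gives $X\varphi=0$ pointwise with no division; so each approach has a small advantage, but yours is the more economical and exposes a superfluous hypothesis.
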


\begin{proof}
Suppose that the potential function $\varphi $ of $\hbox{\bf x}^{T}$ is
constant along the integral curves of $\hbox{\bf x}^{T}$ and that the mean
curvature vector field $H$ of $M$ is parallel along the integral curves of $%
\hbox{\bf x}^{T}$. Then we have $\mathbf{x}^{T}\varphi =0$ and $D_{\mathbf{x}%
^{T}}H=0$. Then, by applying \eqref{6.14}, we get 
\begin{equation}
Ric(\mathbf{x}^{T},\mathbf{x}^{T})=-(n-1)\mathbf{x}^{T}\varphi =0\text{.}
\label{6.16}
\end{equation}

Also, equation \eqref{3.7} implies $g(H,\mathbf{x}^{N})=\varphi -1$, which,
in view of the fact that $H$ is parallel along the integral curves of $%
\mathbf{x}^{T}$, the equation \eqref{3.2} gives%
\begin{equation*}
\mathbf{x}^{T}\varphi =-g(H,h(\mathbf{x}^{T},\mathbf{x}^{T})).
\end{equation*}%
Since $\varphi $ is constant along integral curves of $\mathbf{x}^{T}$,
equation \eqref{6.16} and above equation yield 
\begin{equation}
g(H,h(\mathbf{x}^{T},\mathbf{x}^{T}))=0\text{.}  \label{6.17}
\end{equation}%
Using equations \eqref{6.16} and \eqref{6.17} in equation \eqref{6.11} gives 
\begin{equation*}
h(X,\mathbf{x}^{T})=0\text{,}
\end{equation*}%
for any $X$ tangent to $M$. The above equation implies $Ric(X,\mathbf{x}%
^{T})=0$ for $X$ tangent to $M$, which proves $X\varphi =0$. Hence $\varphi $
is a constant.

Now, define the function $f=\frac{1}{2}\left\vert \mathbf{x}^{T}\right\vert
^{2}\text{,}$ which in view of \eqref{3.1} and \eqref{3.7} gives the
gradient $\nabla f$ and the Hessian of $f$ as 
\begin{equation}
\nabla f=\varphi \mathbf{x}^{T}\text{,\quad }H_{f}(X,Y)=\varphi ^{2}g(X,Y)%
\text{.}  \label{6.18}
\end{equation}

If $f$ is constant, then  \eqref{6.18} implies either constant $\varphi
=0 $ or $\mathbf{x}^{T}=0$ and both of these in view of equations \eqref{3.1}%
, \eqref{3.7} will imply that $\mathbf{x}^{T}$ is a parallel vector field
which is contrary to our assumption. Hence $f$ must be a nonconstant
function satisfying the Hessian condition in \eqref{6.18} with nonzero constant $\varphi $.
Consequently, $M$ is isometric to a Euclidean space (cf. \cite[Theorem 1]%
{P11}).
\end{proof}

\begin{remark}
For further global results on compact Euclidean submanifolds with conformal
canonical vector fields, see \cite{AAD17,DA17}.
\end{remark}

\medskip

\noindent \textbf{Acknowledgements:} This work is supported by King Saud
University, Deanship of Scientific Research, College of Science Research
Center.

\bigskip

\vskip.25in

\noindent \textsl{Bang-Yen Chen }

\textsl{\noindent 2231 Tamarack Drive, }

\textsl{\noindent Okemos, Michigan 48864, U.S.A.}

\noindent e-mail: {chenb@msu.edu}

\vskip.2in

\noindent \textsl{Sharief Deshmukh }

\textsl{\noindent Department of Mathematics, }

\textsl{\noindent King Saud University }

\textsl{\noindent P.O. Box-2455 Riyadh-11451, Saudi Arabia}

\noindent e-mail:{shariefd@ksu.edu.sa}


\end{document}